\UseAllTwocells \xyoption{frame} \CompileMatrices
\newtheorem{prop}{Proposition}[section]
\newtheorem{thm}[prop]{Theorem}
\newtheorem{question}[prop]{Question}
\newtheorem{theorem}{Theorem}[section]
\newtheorem{example}[theorem]{Example}
\theoremstyle{remark}
\theoremstyle{remark}
\numberwithin{equation}{section}
\newcommand{\com}{\mathbb{C}}
\def\<{\left\langle}
\def\>{\right\rangle}
\newcommand{\bj}{\mathbf{j}}
\newcommand{\complex}{{\mathbb C}}
\newcommand{\End}{\operatorname{End}}
\newcommand{\Ad}{\operatorname{Ad}}
\begin{document}
\title{Conjugacy classes and characters for extensions of finite groups}
\author[Tang]{Xiang Tang}
\address{Department of Mathematics\\ Washington University\\ St. Louis\\ MO 63130\\ USA}
\email{xtang@math.wustl.edu}

\author[Tseng]{Hsian-Hua Tseng}
\address{Department of Mathematics\\ Ohio State University\\ 100 Math Tower, 231 West 18th Ave. \\ Columbus \\ OH 43210\\ USA}
\email{hhtseng@math.ohio-state.edu}

\date{\today}

\begin{abstract}
Let $H$ be an extension of a finite group $Q$ by a finite group $G$. Inspired by the results of duality theorems for \'etale gerbes on orbifolds, we describe the number of conjugacy classes of $H$ that maps to the same conjugacy class of $Q$.  Furthermore, we prove a generalization of the orthogonality relation between characters of $G$. 
\end{abstract}

\maketitle

\section{Introduction}

Extensions of finite groups play an important role in the theory of finite groups. For example, the composition serious of a finite group $H$ consists of a sequence of subgroups $H_i$ 
\[
1=H_0\lhd H_1\lhd H_2\lhd \cdots\lhd H_n=H,
\] 
such that  $H_i$ is a strict normal subgroup of $H_{i+1}$ with a simple quotient group $H_{i+1}/H_i$, for $i=0, \cdots, n-1$. Therefore, with the classification theorem of finite simple groups, the study of extensions of finite
groups would describe and classify all finite groups. 

The structure of extensions of finite groups has been  studied for a long time, see \cite{Schreier}. In this paper, we look at extensions of finite groups from a geometric point of view. A finite group $G$ is a groupoid with one unit. In the language of stacks \cite{be-xu}, such a group(oid) corresponds to the classifying stack $BG$ of principal $G$-bundles. An extension of a finite group $Q$ by a finite group $G$
\[
1\longrightarrow G\longrightarrow H\longrightarrow Q\longrightarrow 1
\]
is equivalent to a $G$-gerbe 
\[
BH\longrightarrow BQ, 
\] 
a bundle of $BG$ over $BQ$, c.f. \cite{la-st-xu}. 

Our study of extensions of finite groups is motivated by a conjecture in Mathematical physics \cite{hel-hen-pan-sh}.  Let $\widehat{G}$ be the finite set of isomorphism classes of irreducible unitary representations of $G$. The above extension $H$ of $Q$ by $G$ gives a natural action of $Q$ on $\widehat{G}$. Consider the transformation groupoid $\widehat{G}\rtimes Q\rightrightarrows \widehat{G}$. There is a canonical class $c$ in $H^{2}(\widehat{G}\rtimes Q, U(1))$ associated to the extension $H$.  The decomposition conjecture in \cite{hel-hen-pan-sh} suggests that the geometry of a $G$-gerbe associated to the extension $H$ is equivalent to the geometry of the orbifold associated to the groupoid $\widehat{G}\rtimes Q$ twisted by $c$. We studied this   conjecture in \cite{TT} from the view point of noncommutative geometry. In particular, we proved that the group algebra of $H$ is Morita equivalent the $c$-twisted groupoid algebra of $\widehat{G}\rtimes Q$. The detail of this is reviewed in Section \ref{sec:groupalgebra}.

In this short note, we present two results from our analysis of the structure of $\complex H$. One result concerns the relations between conjugacy classes of $H$ and $Q$, see Section \ref{subsec:conjugacy}. The other result concerns a generalized orthogonality relation between characters of $G$, see Section \ref{subsec:orthogonality}. 

To the best of our knowledge, the results in this paper are new. We would like to thank I. M. Isaacs for discussions related to Question \ref{counting_question}. Tang's research is partially supported by NSF grant 0900985, and NSA grant H96230-13-1-02. Tseng's research is partially supported by by NSF grant DMS-0757722 and Simons Foundation collaboration grant.

\section{Group algebras of finite group extensions}\label{sec:groupalgebra}
Consider an extension of finite groups as in
\begin{equation}\label{eq:extension_reproduced}
1\longrightarrow G\stackrel{i}{\longrightarrow} H
\stackrel{j}{\longrightarrow} Q\longrightarrow 1.
\end{equation}
As part of our study of gerbe duality, the structure of the group algebra $\complex H$ is analyzed in \cite{TT}. We briefly recall the results.

Choose a section $s:Q\rightarrow H$ of $j:H\to Q$ above such that $j\circ s=id$, and $s(1)=1$. Since $G$ and $Q$ are finite groups, such a section $s$ always exists. For $q_1,q_2\in Q$ define $\tau(q_1, q_2):=s(q_1)s(q_2)s(q_1q_2)^{-1}$. It is easy to see that $\tau(q_1,q_2)\in \ker(j)=G$, so we obtain 
$$\tau:Q\times Q\rightarrow G.$$
Clearly $\tau$ is trivial (i.e. $\tau(-,-)=1$) if and only if $s: Q\to H$ is a group homomorphism, which in turn is equivalent to the extension (\ref{eq:extension_reproduced}) being a split extension.

The definition of $\tau$ may be written as
\begin{equation}
\label{eq:tau-def} s(q_1)s(q_2)=\tau(q_1,q_2)s(q_1q_2).
\end{equation}
By associativity, we have
$(s(q_1)s(q_2))s(q_3)=s(q_1)(s(q_2)s(q_3))$. It follows that 
\begin{equation}\label{eq:tau-cocycle}
\tau(q_1,q_2)\tau(q_1q_2,q_3)=s(q_1)\tau(q_2,q_3)s(q_1)^{-1}\tau(q_1,q_2q_3).
\end{equation}

Given the section $s$, we can define a set-theoretic bijection between $H$ and $G\times Q$:
$$\alpha: H\rightarrow G\times Q,\quad \alpha(h):=(hs(j(h))^{-1}, j(h)).$$ 
The inverse of $\alpha$ is $$G\times Q\to H, \quad (g,q) \mapsto i(g)s(q).$$ 
The group structure on $H$ induces a new group structure $\cdot$ on $G\times Q$ via $\alpha$. This group structure is given by
\begin{equation}\label{eq:twisted-prod-gp}
(g_1,q_1)\cdot (g_2,q_2)=(g_1\Ad_{s(q_1)}(g_2)\tau(q_1,q_2),q_1q_2).
\end{equation}
Here $\Ad_{h}(\cdot)$ denotes the conjugation action of an element $h\in H$ on $G$, which is an automorphism of $G$ because $G$ is normal in $H$. Denote by 
$$G\rtimes_{s,\tau}Q$$ the set $G\times Q$ with the group structure given by (\ref{eq:twisted-prod-gp}). The definition implies that $\alpha$ is a group isomorphism: $$\alpha: H \to G\rtimes_{s,\tau}Q.$$ It is easy to check that different choices of the section $s$ yield isomorphic groups $G\rtimes_{s,\tau}Q$.

The group isomorphism $\alpha$ naturally induces an isomorphism of group algebras 
$$\alpha: \complex H \overset{\simeq}{\longrightarrow} \complex (G\rtimes_{s,\tau}Q).$$
Given $s$ and $\tau$, we let an element $q\in Q$ act on $\complex G$ by conjugation by $s(q)$. This does not give an action of $Q$ on $\complex G$, and the failure of this to be an action is governed by $\tau$. In other words, this defines a $\tau$-twisted action of $Q$ on $\complex G$. Hence the group algebra $\complex(G\rtimes_{s,\tau}Q)$ can be written as a twisted crossed product algebra $\complex G\rtimes _{s,\tau}Q$.

Let $\widehat{G}$ be the set of isomorphism classes of irreducible complex linear representations of $G$. Furthermore, for every element $[\rho]$ in $\widehat{G}$, we {\em choose} an irreducible representation in the class $[\rho]$ denoted by $$\rho:G\to \End(V_\rho),$$ where $V_\rho$ is a certain finite dimensional $\complex$-vector space. The group algebra $\complex G$ is isomorphic to a direct sum of matrix algebra $\oplus_{[\rho]\in \widehat{G}}\End(V_\rho)$:
$$\beta: \complex G\overset{\simeq}{\longrightarrow} \oplus_{[\rho]\in \widehat{G}}\End(V_\rho), \quad g\mapsto (\rho(g))_{[\rho]\in \widehat{G}}.$$
This is well-known, see e.g. \cite[Proposition 3.29]{fu-ha}. 

Next we define an action of $Q$ on $\widehat{G}$.  Let $\rho: G\to \End(V_\rho)$ be a $\complex$-linear representation of $G$. Given $q\in Q$, we obtain another $G$ representation $\tilde{\rho}$ defined by $$G\ni g\mapsto \rho(\Ad_{s(q)}(g)).$$ It is easy to see that $\tilde{\rho}$ is irreducible if and only if $\rho$ is. If $s': Q\to H$ is another section of $j$, then we have $\rho\circ \Ad_{s(q)}=\rho\circ \Ad_{s'(q)}\Ad_{s'(q)^{-1}s(q)}$. Since $s'(q)^{-1}s(q)\in G$, $\Ad_{s'(q)^{-1}s(q)}$ is an inner automorphism of $G$. Hence $\rho\circ \Ad_{s(q)}$ and $\rho\circ \Ad_{s'(q)}$ are isomorphic $G$-representations. Therefore the assignment $(q, \rho)\mapsto \tilde{\rho}$ yields a right $Q$-action on $\widehat{G}$;
namely, $q\in Q$ sends the class $[\rho]\in \widehat{G}$ to the class $[\tilde{\rho}]\in \widehat{G}$. For notational convenience, we write this right action as a left action. We denote the image of the isomorphism class $[\rho]\in \widehat{G}$ under the action by $q$ by $q([\rho])$. By abuse of notation, we denote the chosen irreducible $G$-representation that represents the class $q([\rho])$ also by $q([\rho]): G\to \End(V_{q([\rho])})$. Let $$\widehat{G}\rtimes Q:=(\widehat{G}\times Q \rightrightarrows\widehat{G})$$ be the groupoid associated to this $Q$-action on $\widehat{G}$.

By construction, the representation $q([\rho]): G\to \End(V_{q([\rho])})$ is equivalent to the representation $\tilde{\rho}: G\to \End(V_\rho)$ defined by $g\mapsto \rho(\Ad_{s(q)}(g))$. Therefore there exists a $\complex$-linear isomorphism,
$$T_q^{[\rho]}: V_{\rho}\to V_{q([\rho])},$$ that intertwines the two representations, namely
$$\rho(\Ad_{s(q)}(g))={T^{[\rho]}_q}^{-1}\circ q([\rho])(g)\circ T^{[\rho]}_q.$$ We may choose $T^{[\rho]}_1$ to be the identity map on $V_\rho$. It can be shown that there are constants
$c^{[\rho]}(q_1, q_2)$ such that $T^{q_1([\rho])}_{q_2}\circ
T^{[\rho]}_{q_1}\circ \rho(\tau(q_1,q_2))\circ
{T^{[\rho]}_{q_1q_2}}^{-1}$ is $c^{[\rho]}(q_1, q_2)$ times the
identity map. In other words,
\begin{equation}
\label{eq:dfn-c}T^{q_1([\rho])}_{q_2}\circ
T^{[\rho]}_{q_1}=c^{[\rho]}(q_1,q_2)T^{[\rho]}_{q_1q_2}\rho(\tau(q_1,q_2))^{-1}.
\end{equation}
Since the collection $\{\rho\}$ consists of unitary representations, the isomorphisms $T^{[\rho]}_q$ can also be chosen to be unitary. Therefore, $c^{[\rho]}(q_1,q_2)$ actually takes value in $U(1)$. By \cite[Proposition 3.1]{TT}, The function 
$$c:\widehat{G}\times Q\times Q\to U(1),\quad ([\rho], q_1,q_2)\mapsto c^{[\rho]}(q_1,q_2)$$ is a 2-cocycle on the groupoid $\widehat{G}\rtimes Q$ such that
$c^{[\rho]}(1,q)=c^{[\rho]}(q,1)=1$ for any $[\rho]\in \widehat{G},
q\in Q$. The cohomology class defined by $c$ is independent of the
choices of the section $s$ and the operator $T^{[\rho]}_q$.

Let $$C(\widehat{G}\rtimes Q,c),$$ be the {\em twisted groupoid algebra} associated to the cocycle $c$ on $\widehat{G}\rtimes Q$. We explain the definition of $C(\widehat{G}\rtimes Q,
c)$ and refer the readers to \cite{tu-la-xu} for more details. By definition $C(\widehat{G}\rtimes Q, c)$ is the set of $C(\widehat{G})$-valued functions on $Q$, i.e., $\complex$-valued functions on $\widehat{G}\times Q$. By abuse of notation, for $([\rho],q)\in \widehat{G}\times Q$ we also denote by $([\rho], q)$ the function on $\widehat{G}\times Q$ which takes value $1$ at $([\rho],q)$ and $0$ elsewhere. The collection $\{([\rho], q)\}$ of functions on $\widehat{G}\times Q$ forms an additive basis of
$C(\widehat{G}\rtimes Q, c)$. The set $C(\widehat{G}\rtimes Q, c)$ is endowed with a product structure defined by
\[
([\rho],q)\circ([\rho'],q')=\left\{\begin{array}{ll}c^{[\rho]}(q,q')([\rho],
qq')&\ \text{if\ }[\rho']=q([\rho])\\ 0&\
\text{otherwise}\end{array}\right. .
\]
The cocycle condition of $c$ implies that this product is associative.

Let $\oplus_{[\rho]\in \widehat{G}}\End(V_\rho)\otimes \complex Q$ be the $\complex$-vector space spanned by elements of the form $(x_\rho,
q)$, where $x_\rho$ is an element in $\End(V_\rho)$ with $[\rho]\in
\widehat{G}$ and $q\in Q$. We equip this space with a product $\circ$ defined as follows:
\[
(x_{\rho_1},q_1)\circ (\tilde{x}_{\rho_2},q_2):=
\left\{
\begin{array}{ll}
(x_{\rho_1}{T^{[\rho_1]}_{q_1}}^{-1}\tilde{x}_{q_1([\rho_1])}T^{[\rho_1]}_{q_1}\rho_1(\tau(q_1,q_2)),
q_1q_2),&
\text{if}\ [\rho_2]=q_1([\rho_1]),\\
0&\text{otherwise}.
\end{array}
\right.
\]
Let $$\oplus_{[\rho]}\End(V_{\rho})\rtimes_{T,\tau}Q$$ be the space $\oplus_{[\rho]\in \widehat{G}}\End(V_\rho)\otimes \complex Q$ with the product $\circ$ defined above. We call this the twisted crossed product algebra. This algebra plays an important role in the following structure result on the group algebra $\complex H$:
\begin{prop}[\cite{TT}, Proposition 3.2]\label{prop:matrix-coeff-algebra}
The map $$\kappa: G\times Q\ni (g,q)\mapsto \sum_{[\rho]\in \widehat{G}}(\rho(g),q)$$ defines an algebra isomorphism from the group algebra $\complex G\rtimes_{s,\tau}Q$ to the twisted crossed product algebra $\oplus_{[\rho]}\End(V_{\rho})\rtimes_{T,\tau}Q$. Hence, $$\kappa\circ\alpha:\complex H\to
\oplus_{[\rho]}\End(V_{\rho})\rtimes_{T,\tau}Q$$ is an algebra isomorphism.
\end{prop}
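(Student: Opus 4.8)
The plan is to verify directly that $\kappa$ is simultaneously a linear isomorphism and multiplicative, and then to combine with the already-established isomorphism $\alpha$.

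First I would observe that, as a map of vector spaces, $\kappa$ is block-diagonal with respect to the $Q$-grading. Writing $\complex G\rtimes_{s,\tau}Q=\bigoplus_{q\in Q}\complex G\cdot q$ and $\oplus_{[\rho]}\End(V_\rho)\rtimes_{T,\tau}Q=\bigoplus_{q\in Q}\big(\oplus_{[\rho]}\End(V_\rho)\big)\cdot q$, the map $\kappa$ carries the $q$-block to the $q$-block via $g\mapsto(\rho(g))_{[\rho]}$, which is precisely the isomorphism $\beta:\complex G\overset{\simeq}{\to}\oplus_{[\rho]}\End(V_\rho)$ recalled above. Hence $\kappa$ is a direct sum of $|Q|$ copies of $\beta$, and is therefore a linear isomorphism; equivalently, both sides have dimension $|G|\,|Q|$ (using $\sum_{[\rho]}(\dim V_\rho)^2=|G|$) and $\kappa$ restricts to an isomorphism on each graded piece.

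The heart of the argument is to check that $\kappa$ is an algebra homomorphism, for which it suffices to test the basis elements $(g,q)$. On the one hand, using the product (\ref{eq:twisted-prod-gp}) together with the fact that each $\rho$ is a homomorphism, one finds $\kappa\big((g_1,q_1)\cdot(g_2,q_2)\big)=\sum_{[\rho]}\big(\rho(g_1)\,\rho(\Ad_{s(q_1)}(g_2))\,\rho(\tau(q_1,q_2)),\,q_1q_2\big)$. On the other hand, expanding $\kappa(g_1,q_1)\circ\kappa(g_2,q_2)$ with the twisted product $\circ$, the selection rule in the definition of $\circ$ forces the only surviving summands to be those with $[\rho_2]=q_1([\rho_1])$, which yields $\sum_{[\rho]}\big(\rho(g_1)\,{T^{[\rho]}_{q_1}}^{-1}\,q_1([\rho])(g_2)\,T^{[\rho]}_{q_1}\,\rho(\tau(q_1,q_2)),\,q_1q_2\big)$.

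These two expressions agree term by term thanks to the intertwining relation ${T^{[\rho]}_{q}}^{-1}\circ q([\rho])(g)\circ T^{[\rho]}_{q}=\rho(\Ad_{s(q)}(g))$ that defines the operators $T^{[\rho]}_q$: this is exactly the identity needed to convert the conjugation by $T^{[\rho]}_{q_1}$ into $\rho(\Ad_{s(q_1)}(g_2))$. I expect this matching to be the only delicate point. One must keep track of the factor $\rho(\tau(q_1,q_2))$, which sits on the right in both products and so requires no manipulation, and confirm that the selection rule ``$[\rho_2]=q_1([\rho_1])$'' in the target singles out precisely the one nonzero summand. Being a multiplicative linear isomorphism, $\kappa$ is an algebra isomorphism (unit-preservation is automatic, since a bijective multiplicative map sends the unit $(1,1)$ to the identity of the target). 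Finally, composing with the algebra isomorphism $\alpha:\complex H\overset{\simeq}{\to}\complex G\rtimes_{s,\tau}Q$ established earlier shows that $\kappa\circ\alpha$ is an algebra isomorphism, as claimed.
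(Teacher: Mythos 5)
Your proposal is correct and follows the standard direct verification: $\kappa$ is the $|Q|$-fold direct sum of the Wedderburn isomorphism $\beta$ on each $Q$-graded block, and multiplicativity on basis elements reduces, via the selection rule $[\rho_2]=q_1([\rho_1])$, exactly to the intertwining relation $\rho(\Ad_{s(q_1)}(g_2))={T^{[\rho]}_{q_1}}^{-1}q_1([\rho])(g_2)T^{[\rho]}_{q_1}$, with the $\rho(\tau(q_1,q_2))$ factor matching on the nose. The present paper quotes this proposition from \cite{TT} without reproducing its proof, but your argument is the expected one for that result, and the remaining details (unit preservation and composition with $\alpha$) are handled correctly.
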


Proposition \ref{prop:matrix-coeff-algebra} is used in \cite[Section 3.2]{TT} to prove the following structure result of $\complex H$:
\begin{thm}[\cite{TT}, Theorem 3.1] \label{thm:local-mackey}
The group algebra $\complex H$ is Morita equivalent to the twisted groupoid algebra $C(\widehat{G}\rtimes Q, c)$.
\end{thm}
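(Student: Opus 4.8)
The plan is to invoke Proposition \ref{prop:matrix-coeff-algebra}, which identifies $\complex H$ with the twisted crossed product algebra $A:=\oplus_{[\rho]}\End(V_\rho)\rtimes_{T,\tau}Q$; since Morita equivalence is invariant under algebra isomorphism, it suffices to prove that $A$ is Morita equivalent to $B:=C(\widehat{G}\rtimes Q,c)$. The guiding idea is that each block $\End(V_\rho)\cong M_{\dim V_\rho}(\complex)$ is Morita equivalent to $\complex$ via a rank-one idempotent, and that collapsing all blocks simultaneously turns the crossed-product data $T$ and $\tau$ into the groupoid cocycle $c$. Concretely, I would exhibit a \emph{full} idempotent $e\in A$ with $eAe\cong B$; by the standard corner criterion (an idempotent $e$ with $AeA=A$ makes $A$ and $eAe$ Morita equivalent, the equivalence being implemented by the bimodules $Ae$ and $eA$), this finishes the proof.

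To build $e$, fix for each class $[\rho]$ a unit vector $w_\rho\in V_\rho$ and let $e_\rho\in\End(V_\rho)$ be the orthogonal projection onto $\complex w_\rho$; here I use that the $\rho$ and the intertwiners $T^{[\rho]}_q$ may be taken unitary. Set $e:=\sum_{[\rho]}(e_\rho,1)$. Using $s(1)=1$, $T^{[\rho]}_1=\mathrm{id}$, and $\tau(1,q)=\tau(q,1)=1$, a direct check gives that $e$ is idempotent and that, for any $(x_\rho,q)$,
\[
e\circ(x_\rho,q)\circ e=\bigl(e_\rho\,x_\rho\,f_{q,\rho},\,q\bigr),\qquad f_{q,\rho}:={T^{[\rho]}_q}^{-1}\,e_{q([\rho])}\,T^{[\rho]}_q .
\]
Fullness is immediate from the matrix identity $\End(V_\rho)\,e_\rho\,\End(V_\rho)=\End(V_\rho)$: taking $q=1$ shows $(\End(V_\rho),1)\subseteq AeA$ for every $[\rho]$, and since $(x_\rho,1)\circ(\mathrm{id},q)=(x_\rho,q)$ and $AeA$ is a two-sided ideal, every $(x_\rho,q)$ lies in $AeA$, so $AeA=A$.

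It remains to identify $eAe$ with $B$. Since $e_\rho$ and $f_{q,\rho}$ are rank-one, each space $e_\rho\End(V_\rho)f_{q,\rho}$ is one-dimensional, so $eAe$ has a basis $\{\xi_{[\rho],q}\}_{[\rho]\in\widehat{G},\,q\in Q}$, where I would take $\xi_{[\rho],q}:=\bigl(\,|w_\rho\rangle\langle {T^{[\rho]}_q}^{-1}w_{q([\rho])}|\,,\,q\bigr)$. The heart of the argument is the computation of $\xi_{[\rho],q}\circ\xi_{[\rho'],q'}$: it vanishes unless $[\rho']=q([\rho])$, and in that case I would substitute the cocycle relation \eqref{eq:dfn-c} for $\rho(\tau(q,q'))$ and repeatedly use unitarity of the $T^{[\rho]}_q$ (so that ${T^{[\rho]}_q}^{-1}$ preserves inner products and $\|{T^{[\rho]}_q}^{-1}w_{q([\rho])}\|=1$) to collapse the telescoping product of rank-one operators down to
\[
\xi_{[\rho],q}\circ\xi_{[\rho'],q'}=c^{[\rho]}(q,q')\,\xi_{[\rho],qq'}.
\]
This is exactly the multiplication rule defining $B$, so $([\rho],q)\mapsto\xi_{[\rho],q}$ is the desired algebra isomorphism $B\xrightarrow{\ \sim\ }eAe$; combined with fullness it yields $\complex H\cong A\sim_{\mathrm{Morita}}eAe\cong B$.

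The hard part will be precisely this last structure-constant computation: one must verify that the extra factors $T^{[\rho]}_q$ and $\rho(\tau(q,q'))$ conspire, through \eqref{eq:dfn-c} and unitarity, to leave behind only the scalar $c^{[\rho]}(q,q')$ that defines the twist on the groupoid $\widehat{G}\rtimes Q$. Everything else—idempotency of $e$, fullness, and one-dimensionality of the corners—is formal.
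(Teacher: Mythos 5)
Your proposal is correct and follows essentially the same route as the paper: it reduces via Proposition \ref{prop:matrix-coeff-algebra} to the twisted crossed product $\oplus_{[\rho]}\End(V_\rho)\rtimes_{T,\tau}Q$ and then exhibits explicit equivalence bimodules, which is exactly what the paper (deferring to \cite{TT}) says its proof does --- your full idempotent $e$ built from rank-one projections simply packages those bimodules as $Ae$ and $eA$. The structure-constant computation you identify as the heart of the argument does check out: inverting the relation (\ref{eq:dfn-c}) to get ${T^{[\rho]}_q}^{-1}{T^{[q([\rho])]}_{q'}}^{-1}=c^{[\rho]}(q,q')^{-1}\rho(\tau(q,q')){T^{[\rho]}_{qq'}}^{-1}$ and using unitarity (so that conjugating the scalar $c^{[\rho]}(q,q')^{-1}\in U(1)$ in the second slot of the rank-one operator returns $c^{[\rho]}(q,q')$) collapses the product to $c^{[\rho]}(q,q')\,\xi_{[\rho],qq'}$, matching the multiplication in $C(\widehat{G}\rtimes Q,c)$.
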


We remark that the proof of Theorem \ref{thm:local-mackey} is done by explicitly constructing Morita equivalence bimodules between the two algebras.

Since $j:H\to Q$ is a surjective group homomorphism, $j$ induces a surjective homomorphism of algebras from $\complex H$ to $\complex Q$. It is well-known that the center of $\complex Q$ has a canonical additive basis indexed by the conjugacy classes of $Q$. This decomposition of the center $Z(\complex Q)$ and the surjection $\complex H\to \complex Q$ implies that the center of $\complex H$, as a vector space, decomposes into a
direct sum of subspaces $Z(\complex H)_{\<q\>}$ indexed by conjugacy
classes $\<q\>$ of $Q$,
$$Z(\complex H)=\bigoplus_{\<q\>\subset Q} Z(\complex H)_{\<q\>}.$$
As shown in \cite[Section 3.2]{TT}, the center $Z(C(\widehat{G}\rtimes Q, c))$
decomposes into a direct sum of subspaces $Z(C(\widehat{G}\rtimes Q,
c))_{\<q\>}$ indexed by conjugacy classes of $Q$,
$$Z(C(\widehat{G}\rtimes Q, c))=\bigoplus_{\<q\>\subset Q}Z(C(\widehat{G}\rtimes Q,
c))_{\<q\>}.$$

The explicit Morita equivalence bimodules in the proof of Theorem \ref{thm:local-mackey} yield an algebra isomorphism from the center of $\complex H$ to the center of $C(\widehat{G}\rtimes Q, c)$, which we denote by $I$.

\begin{prop}[\cite{TT}, Proposition 3.4]\label{prop:conjugacy}
The isomorphism $$I:Z(\complex H)\to Z(C(\widehat{G}\rtimes Q,c))$$ is compatible with the decompositions into subspaces indexed by conjugacy classes of $Q$, i.e., $I$ is an isomorphism from $Z(\complex H)_{\<q\>}$ to $Z(C(\widehat{G}\rtimes Q, c))_{\<q\>}$.
\end{prop}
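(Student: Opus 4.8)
The plan is to reduce the statement to the single structural feature that both $\complex H$ and $C(\widehat{G}\rtimes Q,c)$ are \emph{$Q$-graded} algebras, and that the two given decompositions of their centers are the canonical ones attached to any $Q$-graded algebra. Indeed, writing $(\complex H)_q=\operatorname{span}\{h\in H: j(h)=q\}$, one has $(\complex H)_{q_1}(\complex H)_{q_2}\subseteq(\complex H)_{q_1q_2}$, so $\complex H=\bigoplus_{q\in Q}(\complex H)_q$ is $Q$-graded; likewise $C(\widehat{G}\rtimes Q,c)=\bigoplus_{q\in Q}\operatorname{span}\{([\rho],q):[\rho]\in\widehat{G}\}$ is $Q$-graded, since the product $([\rho],q)\circ([\rho'],q')$ has degree $qq'$ (the cocycle $c$ contributes only a scalar). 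The isomorphism $I$ will then be forced to respect the conjugacy-class decomposition once we know it respects these $Q$-gradings.

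First I would record the elementary lemma that for any $Q$-graded algebra $A=\bigoplus_q A_q$ the center splits as $Z(A)=\bigoplus_{\<q\>}Z(A)_{\<q\>}$ with $Z(A)_{\<q\>}:=Z(A)\cap\bigoplus_{q'\in\<q\>}A_{q'}$. The point is that if $z=\sum_q z_q\in Z(A)$, then centrality against homogeneous $a\in A_p$ forces $z_{q'}a=a\,z_{p^{-1}q'p}$ for every $q'$; summing over a conjugacy class and using that $q'\mapsto p^{-1}q'p$ permutes $\<q\>$ shows each partial sum $\sum_{q'\in\<q\>}z_{q'}$ is again central. Applying this to $\complex H$ recovers exactly the decomposition coming from $j_*:Z(\complex H)\to Z(\complex Q)=\bigoplus_{\<q\>}\complex\cdot z_{\<q\>}$ (each conjugacy class sum $C_{\<h\>}$ of $H$ is supported in degrees lying in the single class $\<j(h)\>$), and applying it to $C(\widehat{G}\rtimes Q,c)$ recovers the decomposition of \cite{TT}; so both indexings are intrinsic to the $Q$-grading.

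It then remains to show that $I$ is \emph{graded}. I would factor $I$ through Proposition~\ref{prop:matrix-coeff-algebra}: the isomorphism $\kappa\circ\alpha:\complex H\to\bigoplus_{[\rho]}\End(V_\rho)\rtimes_{T,\tau}Q$ is visibly degree-preserving (both $\alpha$ and $\kappa(g,q)=\sum_\rho(\rho(g),q)$ preserve the $Q$-component), so its restriction to centers preserves the conjugacy-class decomposition by the lemma. The remaining factor is the center isomorphism induced by the Morita equivalence of Theorem~\ref{thm:local-mackey} between $\bigoplus_{[\rho]}\End(V_\rho)\rtimes_{T,\tau}Q$ and $C(\widehat{G}\rtimes Q,c)$. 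Here I would invoke that the implementing bimodule $M$ is itself $Q$-graded and prove a graded-Morita statement: the canonical center isomorphism $\Phi$, determined by $z\cdot m=m\cdot\Phi(z)$ for all $m\in M$, is graded. For $z$ supported in $\<q\>$ and homogeneous $m\in M_d$, the left side is supported in degrees $d\<q\>$ (using $d^{-1}\<q\>d=\<q\>$), while the component $m\cdot\Phi(z)_p$ sits in degree $dp$; matching supports and using that $M$ is a faithful right module forces every component $\Phi(z)_p$ with $p\notin\<q\>$ to vanish, so $\Phi(z)\in Z(C(\widehat{G}\rtimes Q,c))_{\<q\>}$. Running the same argument with the inverse bimodule gives the reverse inclusion.

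The main obstacle is therefore the one genuinely computational input: checking that the explicit Morita bimodule constructed in the proof of Theorem~\ref{thm:local-mackey} respects the $Q$-grading (equivalently, that it is assembled from the degree-preserving maps above together with the standard, degree-neutral Morita equivalences $\End(V_\rho)\sim\complex$ implemented by the $V_\rho$). Once gradedness of the bimodule is in hand, the faithfulness used in the support-matching step is automatic from the Morita setup, and the proposition follows by composing the two graded center isomorphisms.
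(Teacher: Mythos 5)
Your argument is correct in substance, but it takes a genuinely different route from the source. Note first that this paper contains no proof of the proposition at all: it is imported from \cite{TT} (Proposition 3.4 there), where $I$ is the center map induced by the explicit Morita bimodules constructed in the proof of Theorem \ref{thm:local-mackey}, and the compatibility is verified by direct computation with those bimodules. You instead isolate the abstract mechanism: both $\complex H$ and $C(\widehat{G}\rtimes Q,c)$ are $Q$-graded algebras; the center of any $Q$-graded algebra $A$ splits along conjugacy classes of $Q$ (your key identity $z_{q'}a=a\,z_{p^{-1}q'p}$ for homogeneous $a\in A_p$ is right, and both of the paper's decompositions are indeed the canonical graded ones --- each class sum $1_{\<h\>}$ is supported in degrees lying in $\<j(h)\>$, and $([\rho],q)\circ([\rho'],q')$ is a scalar multiple of $([\rho],qq')$); and a $Q$-graded Morita bimodule induces a center isomorphism respecting these decompositions, by your support-matching argument, which is also sound (the point that $p\mapsto dp$ is a bijection, so that the degree-$dp$ component of $m\cdot\Phi(z)$ is exactly $m\cdot\Phi(z)_p$, together with faithfulness of a Morita bimodule, is what makes it work). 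The trade-off: your route yields a reusable general principle and reduces the proposition to a single verification, whereas the computational route of \cite{TT} is self-contained once the bimodule has been written down.

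The one step you defer --- that the explicit bimodule implementing Theorem \ref{thm:local-mackey} is $Q$-graded --- is genuinely indispensable to your argument and cannot be checked from the present paper, which never displays the bimodule (it only remarks that the proof of Theorem \ref{thm:local-mackey} constructs one explicitly). This is not a flaw in the strategy, since the construction in \cite{TT} does carry the required grading: the equivalence between $\oplus_{[\rho]}\End(V_\rho)\rtimes_{T,\tau}Q$ and $C(\widehat{G}\rtimes Q,c)$ is implemented on a space built from $\bigoplus_{[\rho]}V_\rho\otimes\complex Q$ with both actions translating the $Q$-coordinate multiplicatively, and your factorization of $I$ through the visibly degree-preserving isomorphism $\kappa\circ\alpha$ of Proposition \ref{prop:matrix-coeff-algebra} is consistent with how the equivalence is assembled there. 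With that verification supplied from \cite{TT}, your proof is complete.
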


In the rest of this paper, we discuss some group-theoretic applications of
our analysis of the group algebra $\com H$.

\section{Counting conjugacy classes in group
extensions}\label{subsec:conjugacy}

Let $j: H\to Q$ be a surjective homomorphism of finite groups. Let
$\<q\>\subset Q$ be a conjugacy class of $Q$. The pre-image
$$j^{-1}(\<q\>)\subset H$$ may be partitioned into a disjoint union
of conjugacy classes of $H$. It is natural to ask the following:
\begin{question}\label{counting_question}
How many conjugacy classes of $H$ are contained in $j^{-1}(\<q\>)$?
\end{question}
In this Section, we discuss an answer to this question.

Let $G$ be the kernel of $j: H\to Q$. Then we are in the situation
of the exact sequence (\ref{eq:extension_reproduced}). The
homomorphism $j: H\to Q$ induces a surjective homomorphism $\bj:
\com H\to \com Q$ between group algebras. This, in turn, induces a 
homomorphism $\bj :Z(\com H)\to Z(\com Q)$ between
centers. The centers $Z(\com H)$ and $Z(\com Q)$, viewed as vector
spaces, admit natural bases, $\{1_{\<h\>}\}\subset Z(\com H)$ and
$\{1_{\<q\>}\}\subset Z(\com Q)$, indexed by conjugacy classes. 
These bases satisfy the requirement that if $j(\<h\>)=\<q\>$, then
$\bj(1_{\<h\>})\in \mathbb{N} 1_{\<q\>}$. As $j(\<s(q)\>)=\<q\>$, the map $j:Z(\com H)\to Z(\com Q)$ is surjective. Let
$$Z(\com H)_{\<q\>}:=\bigoplus_{\<h\>\subset j^{-1}(\<q\>)} \com
1_{\<h\>}.$$ By construction, the dimension $\text{dim}\, Z(\com
H)_{\<q\>}$ is the number of conjugacy classes of $H$ that are
contained in $j^{-1}(\<q\>)$. By Proposition \ref{prop:conjugacy},
the isomorphism $I: Z(\com H)\to Z(C(\widehat{G}\rtimes Q,c))$
restricts to an additive isomorphism
$$Z(\com H)_{\<q\>}\simeq Z(C(\widehat{G}\rtimes Q, c))_{\<q\>}.$$
Clearly, the answer to Question \ref{counting_question} is the dimension $\text{dim}\, Z(C(\widehat{G}\rtimes Q, c))_{\<q\>}$, which we now compute.

Let $\widehat{G}^q\subset \widehat{G}$ be the subset consisting of
elements fixed by $q\in Q$. Let $C(q)\subset Q$ be the centralizer
subgroup of $q$. Then, by \cite{ru1}, we have that $Z(C(\widehat{G}\rtimes Q,
c))_{\<q\>}$ is additively isomorphic to the $c$-twisted orbifold
cohomology $H_{orb}^\bullet([\widehat{G}^q/C(q)], c)$. Decompose
$\widehat{G}^q$ into a disjoint union of $C(q)$-orbits:
\begin{equation}\label{orbit_decomp}
\widehat{G}^q=\coprod_i O_i.
\end{equation}
For each $C(q)$-orbit $O_i$, pick a representative $[\rho_i]$ and
denote by $Q_i:=\text{Stab}_{C(q)}([\rho_i])\subset C(q)$ the
stabilizer subgroup of $[\rho_i]$. Consider the homomorphism
$$\gamma_{-,q}^{[\rho_i]}: C(q)\to U(1), \quad C(q)\ni q_1\mapsto
\gamma_{q_1,q}^{[\rho_i]}:=c^{[\rho_i]}(q_1,q)c^{[\rho_i]}(q,q_1)^{-1}.$$
Here, $c^{[\rho]}(-,-)$ is the cocycle defined in (\ref{eq:dfn-c}).
It follows from (\ref{orbit_decomp}) that
$$H_{orb}^\bullet([\widehat{G}^q/C(q)], c)\simeq \bigoplus H_{orb}^\bullet(BQ_i, c).$$
By \cite[Example 6.4]{ru1}, we have that $H_{orb}^\bullet(BQ_i, c)=\com$ if the
following condition holds:
\begin{equation}\label{condition:gamma=1}
\gamma_{q_1,q}^{[\rho_i]}=1\,\, \text{ for all } q_1\in Q_i.
\end{equation}
Moreover, if (\ref{condition:gamma=1}) does not hold, then
$H_{orb}^\bullet(BQ_i, c)=0$. It follows that $\text{dim}\,
Z(C(\widehat{G}\rtimes Q, c))_{\<q\>}$ is equal to
\[
\#\{O_i=C(q)\text{-orbit of }\widehat{G}^q| \text{ there exists }
[\rho_i]\in O_i \text{ s.t. } \gamma_{q_1,q}^{[\rho_i]}=1\,\,
\text{for all } q_1\in Q_i=\text{Stab}_{C(q)}([\rho_i])\}.
\]

In summary, we have obtained the following theorem as an answer to Question \ref{counting_question}.
\begin{theorem}\label{thm:conj-general} Let $H=G\rtimes_{s, \tau} Q$ be an extension of $Q$ by $G$. Consider the canonical quotient map $j:H\to Q$. For $q\in Q$, the number of conjugacy classes of $H$ that is mapped to the conjugacy class $\<q\>$ of $Q$ is equal to   
\begin{equation}\label{count_answer1}
\#\{O_i=C(q)\text{-orbit of }\widehat{G}^q| \text{ there exists }
[\rho_i]\in O_i \text{ s.t. } \gamma_{q_1,q}^{[\rho_i]}=1\,\,
\text{for all } q_1\in Q_i=\text{Stab}_{C(q)}([\rho_i])\}.
\end{equation}

\end{theorem}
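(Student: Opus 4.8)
The plan is to convert the counting of conjugacy classes of $H$ lying over $\<q\>$ into a dimension count on the twisted groupoid algebra side, and then to evaluate that dimension orbit-by-orbit via the twisted orbifold cohomology of the fixed-point quotient. First I would record the enumerative meaning of the quantity to be computed: since the class sums form a canonical basis of the center of a finite group algebra, the subspace $Z(\com H)_{\<q\>} = \bigoplus_{\<h\> \subset j^{-1}(\<q\>)} \com 1_{\<h\>}$ has dimension equal to the number of conjugacy classes of $H$ lying over $\<q\>$, so the theorem amounts to computing $\dim Z(\com H)_{\<q\>}$. Next, by Proposition \ref{prop:conjugacy} the isomorphism $I$ coming from the Morita equivalence of Theorem \ref{thm:local-mackey} respects the $\<q\>$-decompositions, hence restricts to $Z(\com H)_{\<q\>} \simeq Z(C(\widehat{G}\rtimes Q, c))_{\<q\>}$, reducing the problem to the computation of $\dim Z(C(\widehat{G}\rtimes Q,c))_{\<q\>}$.

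The core step is to evaluate this latter dimension geometrically. Following \cite{ru1}, I would identify $Z(C(\widehat{G}\rtimes Q, c))_{\<q\>}$ with the $c$-twisted orbifold cohomology $H_{orb}^\bullet([\widehat{G}^q/C(q)], c)$ of the quotient of the $q$-fixed locus by the centralizer $C(q)$. Decomposing $\widehat{G}^q$ into its $C(q)$-orbits $\coprod_i O_i$ and writing $Q_i = \text{Stab}_{C(q)}([\rho_i])$ for a chosen representative of each orbit, the quotient stack splits as a disjoint union of the $BQ_i$, giving $H_{orb}^\bullet([\widehat{G}^q/C(q)], c) \simeq \bigoplus_i H_{orb}^\bullet(BQ_i, c)$. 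Hence the total dimension is the sum of the local dimensions $\dim H_{orb}^\bullet(BQ_i, c)$.

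To finish, I would invoke the local computation \cite[Example 6.4]{ru1}: for the classifying stack $BQ_i$ the twisted cohomology $H_{orb}^\bullet(BQ_i, c)$ is one-dimensional exactly when the transgressed character $\gamma_{-,q}^{[\rho_i]} \colon q_1 \mapsto c^{[\rho_i]}(q_1,q)\,c^{[\rho_i]}(q,q_1)^{-1}$ is trivial on $Q_i$, and vanishes otherwise. Summing these contributions then counts precisely those orbits $O_i$ for which condition (\ref{condition:gamma=1}) holds, which is the right-hand side (\ref{count_answer1}).

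I expect the genuine difficulty to lie in the geometric identification and its local form rather than in the two enumerative reductions (which are bookkeeping with class sums and with orbit decompositions). Concretely, one must verify that the $\<q\>$-graded piece of the center of the twisted groupoid algebra really is the $c$-twisted orbifold cohomology of $[\widehat{G}^q/C(q)]$, and that the restriction of the $2$-cocycle $c$ of (\ref{eq:dfn-c}) to each sector transgresses to the character $\gamma_{-,q}^{[\rho_i]}$, so that Ruan's Example 6.4 applies with exactly the stated triviality condition on $Q_i$.
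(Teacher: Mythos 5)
Your proposal matches the paper's own proof essentially step for step: the reduction of the count to $\dim Z(\com H)_{\<q\>}$ via class sums, the transfer to $\dim Z(C(\widehat{G}\rtimes Q,c))_{\<q\>}$ using Proposition \ref{prop:conjugacy}, the identification with $H_{orb}^\bullet([\widehat{G}^q/C(q)],c)$ via \cite{ru1}, the orbit decomposition into the $BQ_i$, and the local evaluation by \cite[Example 6.4]{ru1} with the triviality condition on $\gamma_{-,q}^{[\rho_i]}$. Your closing caveat is also well placed, as the paper likewise delegates exactly those identifications to \cite{ru1} rather than reproving them.
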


In the following, we discuss a few special cases of Theorem \ref{thm:conj-general}. 
\begin{example}
If the group $G$ is abelian, then all irreducible representations of
$G$ are $1$-dimensional, and all intertwiners
 in (\ref{eq:dfn-c})
can be taken to be the identity. In this case, (\ref{count_answer1})
can be simplified to
\begin{equation}\label{count_answer2}
\begin{split}
\#\{O_i=C(q)\text{-orbit of }\widehat{G}^q| &\text{ there exists }[\rho_i]\in O_i, \\
&\text{s.t. }\rho_i(\tau(q_1,q)\tau(q,q_1)^{-1})=1\,\, \text{for all } q_1\in Q_i=\text{Stab}_{C(q)}([\rho_i])\}.
\end{split}
\end{equation}
\end{example}

\begin{example}
If the group $G$ is abelian and $H$ is a semi-direct product of $G$
and $Q$, then the cocycle $\tau(-,-)$ can be taken to be trivial. In
this case, (\ref{count_answer1}) can be simplified to
\begin{eqnarray}\label{count_answer2.5}
&\#\{C(q)\text{-orbit of }\widehat{G}^q\}.
\end{eqnarray}
\end{example}

\begin{example}
If the $Q$-action on $\widehat{G}$ is trivial\footnote{Equivalently,
this means that the band of the gerbe $BH\to BQ$ is trivial.}, then
$\widehat{G}^q=\widehat{G}$, and all intertwiners in
(\ref{eq:dfn-c}) can be taken to be the identity. In this case,
(\ref{count_answer1}) can be simplified to
\begin{equation}\label{count_answer3}
\begin{split}
\#\{[\rho]&=\text{isomorphism class of irreducible }
G\text{-representations}|\\
&\hspace{4cm}\rho(\tau(q_1,q)\tau(q,q_1)^{-1})=1\,\, \text{for all
}q_1\in C(q)\}.
\end{split}
\end{equation}
\end{example}


\section{An orthogonality relation of characters}\label{subsec:orthogonality} 
The material in this
Section is inspired by the proof of the orthogonality relation
given in \cite[Chapter 2, Section 12]{Be_Zh}. Using Proposition \ref{prop:matrix-coeff-algebra}, we prove  a generalization of the orthogonality
relation between characters of $G$. For $h\in H$, write the centralizer subgroup of $h$ by $C_H(h)$, and the number of elements in $C_H(h)$ by $|C_H(h)|$.

\begin{theorem}\label{thm:character} Let $H=G\rtimes_{s, \tau}Q$ be an extension of $Q$ by $G$. For $[\rho]\in \widehat{G}$, let $\chi^G_\rho$ be the character of the $G$-representation $V_\rho$. For $(g_1,\ g_2)\in G\times G$, 
\begin{equation}\label{generalized_orthogonal_relation}
\sum_{[\rho]\in \widehat{G}}\sum_{q\in Q}
\chi_\rho^G(g_1^{-1})\chi^G_{q([\rho])}(g_2)= \left\{
\begin{array}{ll}
|C_H(g_1)|,& \text{if } g_1 \text{ and } g_2 \text{ are conjugate {\em in }}H,\\
0&\text{otherwise}.
\end{array}
\right.
\end{equation}\end{theorem}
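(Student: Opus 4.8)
The plan is to reduce the double sum on the left-hand side of \eqref{generalized_orthogonal_relation} to a single sum over $Q$ of ordinary (column) orthogonality relations for $G$, and then to match the resulting quantity with $|C_H(g_1)|$ by a direct counting argument on $H$. The input from the algebra analysis that I would use is the defining relation \eqref{eq:dfn-c} for the intertwiners $T^{[\rho]}_q$ that underlies Proposition \ref{prop:matrix-coeff-algebra}; strictly speaking only the character-level consequence of that relation is needed.

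First I would record the basic character identity coming from the intertwiners. By construction $q([\rho])(g)=T^{[\rho]}_q\,\rho(\Ad_{s(q)}(g))\,(T^{[\rho]}_q)^{-1}$ for all $g\in G$, so taking traces gives
\[
\chi^G_{q([\rho])}(g_2)=\chi^G_\rho\big(\Ad_{s(q)}(g_2)\big)=\chi^G_\rho\big(s(q)g_2 s(q)^{-1}\big).
\]
Substituting this into the left-hand side and interchanging the order of summation, the inner sum over $[\rho]\in\widehat{G}$ becomes, for each fixed $q$, exactly the classical column orthogonality sum $\sum_{[\rho]}\chi^G_\rho(g_1^{-1})\chi^G_\rho(s(q)g_2 s(q)^{-1})$. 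By the second orthogonality relation for $G$ this equals $|C_G(g_1)|$ when $g_1$ is $G$-conjugate to $s(q)g_2 s(q)^{-1}$ and $0$ otherwise. Hence the whole left-hand side equals $|C_G(g_1)|$ times the number of $q\in Q$ for which $g_1$ and $s(q)g_2 s(q)^{-1}$ lie in the same $G$-conjugacy class.

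It then remains to identify this quantity, and here I would use the bijection $H\cong G\times Q$, $h\mapsto(g,q)$ with $h=i(g)s(q)$, which is precisely the inverse of $\alpha$. For $h=i(g)s(q)$ one computes $h\,g_2\,h^{-1}=g\,\big(\Ad_{s(q)}(g_2)\big)\,g^{-1}$ inside $G$, so the equation $h g_2 h^{-1}=g_1$ holds iff $g$ conjugates $\Ad_{s(q)}(g_2)$ to $g_1$ within $G$. For fixed $q$ the number of such $g\in G$ is $|C_G(g_1)|$ if $g_1\sim_G s(q)g_2 s(q)^{-1}$ and $0$ otherwise, so summing over $q$ yields
\[
\#\{h\in H:\ h g_2 h^{-1}=g_1\}=|C_G(g_1)|\cdot\#\{q\in Q:\ g_1\sim_G s(q)g_2 s(q)^{-1}\},
\]
which is exactly the expression produced in the previous step.

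Finally I would interpret the left-hand count group-theoretically: the set $\{h\in H:\ h g_2 h^{-1}=g_1\}$ is empty unless $g_1$ and $g_2$ are conjugate in $H$, and when they are it is a coset of $C_H(g_2)$, hence has $|C_H(g_2)|=|C_H(g_1)|$ elements. This simultaneously yields the vanishing in the non-conjugate case and the value $|C_H(g_1)|$ in the conjugate case, completing the proof. I expect the only real subtlety to be the bookkeeping in this last step: one must pass cleanly between $G$-conjugacy twisted by the $Q$-action and genuine $H$-conjugacy, and invoke $|C_H(g_1)|=|C_H(g_2)|$, which is precisely where the extension structure $H=G\rtimes_{s,\tau}Q$ enters.
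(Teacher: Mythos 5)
Your proof is correct, but it takes a genuinely different route from the paper's. The paper views $\complex H$ as a representation of $H\times H$ (hence of $G\times G$) via $(h_1,h_2)\cdot h=h_1^{-1}hh_2$ and computes its character twice: once directly, giving $\#\{h\in H:\, hg_2h^{-1}=g_1\}$, which is $|C_H(g_1)|$ or $0$; and once by transporting the $G\times G$-action through the algebra isomorphism $\complex H\simeq \oplus_{[\rho]}\End(V_\rho)\rtimes_{T,\tau}Q$ of Proposition \ref{prop:matrix-coeff-algebra}, taking traces block by block over the pairs $([\rho],q)$ to produce the left-hand side of (\ref{generalized_orthogonal_relation}). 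You bypass the twisted crossed product entirely: the only input you take from the structure theory is the trace identity $\chi^G_{q([\rho])}(g_2)=\chi^G_\rho\bigl(s(q)g_2s(q)^{-1}\bigr)$, which already follows from the definition of the $Q$-action on $\widehat{G}$ (the cocycles $\tau$, $c$ and the intertwiners $T^{[\rho]}_q$ all drop out under the trace), and you then finish with the classical second (column) orthogonality relation for $G$ together with the coset decomposition $H=\coprod_{q\in Q}G\,s(q)$, which gives $\#\{h\in H:\,hg_2h^{-1}=g_1\}=|C_G(g_1)|\cdot\#\{q\in Q:\, g_1\sim_G s(q)g_2s(q)^{-1}\}$. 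In effect both arguments evaluate the same count $\#\{h\in H:\,hg_2h^{-1}=g_1\}$; your route is more elementary and makes transparent that the theorem is column orthogonality for $G$ averaged over the fibration of $H$ over $Q$, while the paper's route exhibits the identity as an application of the structural isomorphism that is the paper's main tool, which is the form that connects to its gerbe-duality framework. One small point worth making explicit in your write-up: invoking column orthogonality in the form $\sum_{[\rho]}\chi^G_\rho(g_1^{-1})\chi^G_\rho(x)=|C_G(g_1)|$ when $g_1\sim_G x$ (and $0$ otherwise) uses $\chi^G_\rho(g^{-1})=\overline{\chi^G_\rho(g)}$, which is justified here because the chosen representatives $\rho$ are unitary.
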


\begin{proof}
 Consider
(\ref{eq:extension_reproduced}) again. The group $H\times H$ acts
naturally on the group algebra $\com H$ via
$$(h_1, h_2)\cdot h=h_1^{-1}hh_2.$$
In this way, we may view $\com H$ as a representation of $H\times
H$. Its character $\chi^{H\times H}_{\com H}$ can be calculated as follows:
\begin{equation*}
\begin{split}
\chi^{H\times H}_{\com H}((h_1, h_2))=&\#\{ h\in H| h_1^{-1}hh_2=h \}\\
=&\#\{ h\in H|hh_2h^{-1}=h_1 \}\\
=&\left\{
\begin{array}{ll}
|C_H(h_1)|,& \text{if } h_1 \text{ and } h_2 \text{ are conjugate in }H,\\
0&\text{otherwise}.
\end{array}
\right.
\end{split}
\end{equation*}

We now consider $\com H$ as a representation of the subgroup
$G\times G$. The above calculation gives the character of this
representation: for $(g_1, g_2)\in G\times G$,
\begin{equation}\label{character_answer1}
\chi^{G\times G}_{\com H}((g_1,g_2))=\chi^{H\times H}_{\com H}((g_1,g_2))= \left\{
\begin{array}{ll}
|C_H(g_1)|,& \text{if } g_1 \text{ and } g_2 \text{ are conjugate in }H,\\
0&\text{otherwise}.
\end{array}
\right.
\end{equation}
We calculate the character $\chi^{G\times G}_{\com H}$ by another method. By Proposition
\ref{prop:matrix-coeff-algebra}, there is an isomorphism of algebras
$$\com H\simeq\oplus_{[\rho]\in \widehat{G}}\End(V_{\rho})\rtimes_{T,\tau}Q.$$
Under this isomorphism, the $G\times G$ action on $\com H$ is
identified with the following $G\times G$ action on
$\oplus_{[\rho]\in \widehat{G}}\End(V_{\rho})\rtimes_{T,\tau}Q$:
\begin{equation*}
\begin{split}
(g_1, g_2)\cdot (x_\rho, q):=&(\sum_{\rho_1}\rho_1(g_1^{-1}),1)\circ (x_\rho, q)\circ(\sum_{\rho_2}\rho_2(g_2),1)\\
=&(\rho(g_1^{-1})x_\rho
{T_q^{[\rho]}}^{-1}q([\rho])(g_2)T_q^{[\rho]},q).
\end{split}
\end{equation*}
Here, $\circ$ is the algebra structure on $\oplus_{[\rho]\in
\widehat{G}}\End(V_{\rho})\rtimes_{T,\tau}Q$.

For each $\rho$, fix an isomorphism of $\End(V_\rho)$ with a matrix
algebra, and let $e_{st}^\rho$ denote the standard basis of this
matrix algebra. We use the symbol $(x_\rho)_{st}$ to denote the
$s,t$-entry of $x_\rho\in \End(V_\rho)$. Then we have
$(\rho(g_1^{-1})e_{st}^\rho
{T_q^{[\rho]}}^{-1}q([\rho])(g_2)T_q^{[\rho]})_{st}=(\rho(g_1^{-1}))_{ss}({T_q^{[\rho]}}^{-1}q([\rho])(g_2)T_q^{[\rho]})_{tt}$.
Therefore,
\begin{equation*}
\begin{split}
\text{tr}\,((g_1,g_2)|_{\End(V_\rho)\times \{q\}})=&\sum_{s,t} (\rho(g_1^{-1}))_{ss}({T_q^{[\rho]}}^{-1}q([\rho])(g_2)T_q^{[\rho]})_{tt}\\
=&\text{tr}\,(\rho(g_1^{-1}))\text{tr}\, ({T_q^{[\rho]}}^{-1}q([\rho])(g_2)T_q^{[\rho]})\\
=&\chi^G_\rho(g_1^{-1})\chi^G_{q([\rho])}(g_2),
\end{split}
\end{equation*}
where $\chi^G_\rho$ and $\chi^G_{q([\rho])}$ denote the characters of
the $G$-representations $\rho$ and $q([\rho])$. Summing over $[\rho]\in
\widehat{G}$ and $q\in Q$, we find that
\begin{equation}\label{character_answer2}
\chi^{G\times G}_{\com H}((g_1,g_2))=\sum_{[\rho]\in \widehat{G}}\sum_{q\in Q}
\chi_\rho(g_1^{-1})\chi_{q([\rho])}(g_2).
\end{equation}
Combining the above with (\ref{character_answer1}), we obtain the
desired identity:
\[
\sum_{[\rho]\in \widehat{G}}\sum_{q\in Q}
\chi^G_\rho(g_1^{-1})\chi^G_{q([\rho])}(g_2)= \left\{
\begin{array}{ll}
|C_H(g_1)|,& \text{if } g_1 \text{ and } g_2 \text{ are conjugate {\em in }}H,\\
0&\text{otherwise}.
\end{array}
\right.
\] 
\end{proof}

\end{document}